\documentclass[11pt,letterpaper]{amsart} %

\usepackage[letterpaper,margin=1in]{geometry}
\usepackage{amsmath, amssymb, amsthm}
\usepackage[colorlinks=true, citecolor=blue, urlcolor=blue, linkcolor=blue]{hyperref}

\newtheorem{theorem}{Theorem}[section]

\newtheorem{corollary}[theorem]{Corollary}
\theoremstyle{definition}
\newtheorem{definition}[theorem]{Definition}
\newtheorem{remark}[theorem]{Remark}

\newtheorem{apptheorem}{Theorem}[section]
\newtheorem{applemma}[apptheorem]{Lemma}

\begin{document}

\title{The Equations of General Hassett maximal Cubic Fourfolds} 

\author{Elad Gal}
\address{Faculty of Mathematics \\ Technion, Israel Institute of Technology} 
\email{galelad@campus.technion.ac.il}

\author{Howard Nuer}
\address{Faculty of Mathematics \\ Technion, Israel Institute of Technology}
\email{hnuer@technion.ac.il} 

\begin{abstract}
In this note, we discuss Hassett maximal cubic fourfolds and construct an explicit irreducible component of maximal dimension sixteen of the locus $\mathcal{Z}$ of Hassett maximal cubic fourfolds.
We utilize algebraic and arithmetic methods to analyze the associated lattice of these fourfolds. %
By studying general integral quadratic forms and proving the ADC property for a specific ternary form, we demonstrate that the primitive image of our lattice spans the entire Hassett subset, confirming the Hassett maximality of the cubic fourfolds we describe.
\end{abstract}

\maketitle

\section{Introduction}

Cubic fourfolds occupy an important place in modern algebraic geometry, largely due to their rich Hodge theory and the elusive nature of their rationality problem. %
For a cubic fourfold $X \subset \mathbb{P}^5$, the algebraic cohomology lattice $A(X) := H^4(X, \mathbb{Z}) \cap H^{2,2}(X)$ is positive-definite and always contains the square of the hyperplane class, $h_X^2$. %

A cubic fourfold is termed \emph{special} if the rank of its associated lattice $A(X)$ is at least 2. %
Hassett \cite{Hassett} showed that the locus of special cubic fourfolds is a union of irreducible divisors $\mathcal{C}_d$, where $d$ is the discriminant, and these divisors are nonempty if and only if $d$ is in the \emph{Hassett subset} $\mathcal{H} := \{n \in \mathbb{N}_{\ge 8} : n \equiv 0, 2 \pmod 6\}$. %

Generalizing this, Yang and Yu \cite{YangYu} defined an \emph{$M$-polarizable cubic fourfold} as one admitting a primitive embedding of the positive-definite lattice $M$ into $A(X)$ such that $h_X^2 \in M$, and they introduced the subvariety $\mathcal{C}_M$ of the moduli space $\mathcal{C}$ of cubic fourfolds parametrizing those cubic fourfolds that are $M$-polarizable. %
The intersection of all Hassett divisors, denoted $\mathcal{Z} := \bigcap_{d \in \mathcal{H}} \mathcal{C}_d$, represents fourfolds with maximally rich geometric structures and were coined \emph{Hassett maximal} in \cite{Marquand}. %
In our previous work \cite{GalNuer}, we established that the dimension of $\mathcal{Z}$ is 16 by first showing that any irreducible component must be of the form $\mathcal{C}_M$ for some positive definite lattice $M$ of minimal rank among those primitively representing $\mathcal{H}$, and then showing that this minimal rank is four. %

However, the explicit relationship between the lattice $M$ and the geometry of those cubic fourfolds $[X] \in \mathcal{C}_M$ remained opaque. %
The purpose of this note is to construct an explicit family of cubic polynomials whose vanishing sets are Hassett maximal cubic fourfolds, providing a concrete geometric description of the cubic fourfolds in a sixteen dimensional irreducible component $\mathcal{C}_M$. %
Specifically, we prove the following theorem:
\begin{theorem}\label{thm:Main theorem}
    There exists an irreducible component of $\mathcal{Z}$ of dimension $16$ whose general member is isomorphic to the vanishing locus of some homogeneous cubic polynomial
    $$F \in ( x,y,z ) \cap ( x,y,u ) \cap ( x,z,v ) \cap ( v-by,u-az,w )$$
    for $a,b\in \mathbb{C}$.
\end{theorem}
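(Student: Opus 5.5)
Coordinates on $\mathbb{P}^5$ are $x,y,z,u,v,w$. The four ideals are the ideals of four planes:
\begin{align*}
P_1 &= \{x=y=z=0\}, & P_2 &= \{x=y=u=0\},\\
P_3 &= \{x=z=v=0\}, & P_4 &= \{v=by,\ u=az,\ w=0\}.
\end{align*}
So $F$ ranges over cubics containing these four planes, and the plan is to show that the general such cubic is Hassett maximal and that the resulting family fills out a $16$-dimensional component.

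The classes $h^2,P_1,\dots,P_4$ lie in $A(X)$. Their intersection matrix is computed from $P_i^2=3$ and the intersection type of each pair of planes. Two planes in a cubic fourfold satisfy
\begin{itemize}
\item $P_i\cdot P_j=1$ if they meet in a point,
\item $P_i\cdot P_j=-1$ if they meet in a line (residual quadric computation),
\item $P_i\cdot P_j=0$ if they are disjoint.
\end{itemize}
The pairwise configurations read off from the ideals are as follows:
\begin{itemize}
\item $P_1\cap P_2=\{x=y=z=u=0\}$ is a line;
\item $P_1\cap P_3$ is a line;
\item $P_2\cap P_3=\{x=y=z=u=v=0\}$ is a point;
\item $P_4\cap P_1$ is the point $y=z=0$, $u=v=w=0$;
\item $P_4\cap P_2$ and $P_4\cap P_3$ are determined similarly, and generally depend on whether $a,b$ vanish.
\end{itemize}
This gives a $5\times 5$ Gram matrix. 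I expect it to have rank $4$, because $h^2$ together with planes in special position satisfies a linear relation (for instance via $h^2\cdot P_i=1$ and a residual-quadric relation). The rank-$4$ lattice $M$ they span, after primitive saturation inside $A(X)$, should coincide with the rank-$4$ lattice from \cite{GalNuer}, up to the $a,b$-dependence.

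\textbf{Step 1 (dimension).} Compute the dimension of the family of such cubics modulo $PGL_6$. The space of cubics containing four planes in this configuration has a computable dimension: impose the conditions of containing each plane, and correct for the pairwise intersections, which fix the overlaps. Then add the parameters of the configurations, which are $a,b$ together with the moduli of the configuration modulo projectivities. The count should be $20-4=16$, since $\mathcal{C}_M$ has codimension $\operatorname{rk}M-1=3$ in the $20$-dimensional moduli space. To confirm that the general member is smooth, I would exhibit one explicit smooth $F$, or argue by Bertini away from the base locus. Lattice theory then gives $\dim\mathcal{C}_M=20-(\operatorname{rk}M-1)=17$? This does not match, so I would recheck the count: for $\operatorname{rk}M=4$ the codimension is $3$, giving $17$. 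Since \cite{GalNuer} gives dimension $16$, $M$ must have rank $5$, so the Gram matrix is nondegenerate of rank $5$. I would use this to fix the intended rank.

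\textbf{Step 2 (Hassett maximality).} This is the main obstacle. For $\mathcal{C}_M\subseteq\mathcal{C}_d$ we need, for every $d\in\mathcal{H}$, a primitive rank-$2$ sublattice $K_d\ni h^2$ of $M$ with discriminant $d$. Equivalently, writing $v=h^2$, we need to show that the discriminant function
\[
w\mapsto 3(w\cdot w)-(w\cdot v)^2
\]
on $M/\langle v\rangle$ takes every value in $\mathcal{H}$ on primitive vectors. I would reduce this to the ternary form $Q$ obtained on $v^\perp$, or on a suitable projection, and show that $Q$ primitively represents $\{n\equiv 0,2 \pmod 6\}$ up to scaling. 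The tool is the ADC property: if $Q$ has class number one, or belongs to the ADC list, then local representability implies global representability. I would verify local representability at each prime, using the genus symbol and the primes dividing $\det Q$. I would then upgrade to primitive representation, either by imposing congruence conditions or by using the freedom in the rank-$4$ complement.

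\textbf{Step 3 (irreducible component).} Finally, $\mathcal{C}_M$ is irreducible of dimension $16=\dim\mathcal{Z}$, and it is contained in $\mathcal{Z}$ by Step 2. Therefore it is an irreducible component of $\mathcal{Z}$. Its general member has $A(X)$ equal to the saturation of $M$, and it is the cubic constructed in Step 1.
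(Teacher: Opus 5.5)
You have the same architecture as the paper: four planes, then the lattice $M$, then a $16$-dimensional family, then Hassett maximality via an ADC form, then a component because $\dim\mathcal{Z}=16$. But the concrete inputs are wrong or missing.

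\textbf{The Gram matrix and Step 1.} First, $P_1\cap P_4=\emptyset$. On $P_1$ one has $x=y=z=0$, and then the equations of $P_4$ force $u=v=w=0$; the point $(1:0:0:0:0:0)$ you found does not lie on $P_1$. Likewise $P_2\cap P_4\neq\emptyset$ iff $a=0$, and $P_3\cap P_4\neq\emptyset$ iff $b=0$. So for the general member $[P_4]$ is orthogonal to $[P_1],[P_2],[P_3]$, and the Gram matrix is the $\alpha=\beta=0$ case of the one in the definition of $\mathcal{N}$; its determinant is $76$. Rank $5$ must come from this computation. Deducing it from $\dim\mathcal{Z}=16$ presupposes that $\mathcal{C}_M$ is a component of $\mathcal{Z}$, which is the conclusion.

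Your Step 1 also stops at ``should be $16$'', and counting $a,b$ as moduli would overcount. For $ab\neq0$, rescaling $y,z,u,v$ normalizes $a=b=1$, so the configurations form a single $\mathrm{PGL}_6$-orbit of dimension $9+5+5+9=28$. The union of the planes imposes $4\cdot 10-4-4=32$ independent conditions (the point $P_2\cap P_3$ lies on both lines of $P_1$), giving a $\mathbb{P}^{23}$ of cubics and a total of $51$. A smooth cubic contains only finitely many planes, so the image in moduli has dimension $51-35=16$. This irreducible family, a projective bundle over one orbit, is what actually yields the component in Step 3.

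\textbf{The decisive gap is Step 2.} Since $\operatorname{rk}M=5$, the discriminant form $w\mapsto 3w^2-(w\cdot h^2)^2$ on $M/\langle h^2\rangle$ is \emph{quaternary}. In the basis $[P_1],\dots,[P_4]$ it is $F=8x^2+8y^2+8z^2+8u^2-8xy-8xz+4yz-2u(x+y+z)$. There is no ternary form ``on $v^\perp$'' to which an ADC or class-number-one statement applies off the shelf. Passing from local to global for \emph{primitive} representation of every $n\ge 8$ with $n\equiv 0,2\pmod 6$ (exactly, not ``up to scaling'') is precisely what has to be proved.

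The idea you are missing is the paper's reduction. Allow a half-integral first coordinate and freeze $u$, so that $8F(x/2,y,z,u)=G(T)+57u^2$, with $G=X^2+3Y^2+3Z^2$ and $T$ ranging over $(4\mathbb{Z}+u)^3$. Since $F(x/2,y,z,u)\equiv xu\pmod 2$, an even value with $u$ odd forces an integral vector. Taking $u=1$ makes that vector automatically primitive. Taking $u=-3$ handles $n\equiv 6\pmod{18}$ (where $9\mid 8n-57$), with primitivity following from $9\nmid n$. The argument then needs:
\begin{itemize}
\item the identity $G((4\mathbb{Z}+1)^3)=G(\mathbb{Z}^3)\cap(8\mathbb{N}+7)$;
\item the ADC property of $G$, inherited from $x^2+y^2+3z^2$ via $Q_3(3y,3z,x)=3G(x,y,z)$;
\item Hasse--Minkowski local checks at $2$, $3$ and the remaining primes;
\item explicit primitive vectors for $24$, $42$ and $60$.
\end{itemize}
Without a reduction of this kind, Step 2 is not established, and neither is the containment $\mathcal{C}_M\subset\mathcal{Z}$ on which your Step 3 rests.
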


\subsection*{Acknowledgments}
HN would like to thank Lisa Marquand for numerous discussions about cubic fourfolds and for her interest in this work.
HN was partially supported by ISF Grant 3175/25.

\section{A New Family of Hassett Maximal Cubic Fourfolds}

We begin with a foundational result by Voisin \cite{Voisin} (cf. Yang and Yu \cite[Theorem 6.2]{YangYu}) regarding the intersection of planes in a cubic fourfold, which translates geometric intersections into lattice pairings: %

\begin{theorem}[Voisin]
Let $X \subset \mathbb{P}^5$ be a cubic fourfold and $P_1, P_2 \subset X$ be two planes. Then:
\begin{itemize}
    \item $[P_1] \cdot [P_2] = 0 \iff P_1 \cap P_2 = \emptyset$ %
    \item $[P_1] \cdot [P_2] = 1 \iff P_1 \cap P_2 \text{ is a point}$ %
    \item $[P_1] \cdot [P_2] = -1 \iff P_1 \cap P_2 \text{ is a line}$ %
\end{itemize}
In the third case, there exists a third plane $P_{\text{residual}} \subset X$ such that $[P_{\text{residual}}] \cdot [P_1] = [P_{\text{residual}}] \cdot [P_2] = -1$, and $h_X^2 = [P_1] + [P_2] + [P_{\text{residual}}]$. %
\end{theorem}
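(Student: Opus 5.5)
The plan is to compute everything in $H^4(X,\mathbb{Z})$ using two standard inputs. The first is the self-intersection formula for a plane. The second is the excess intersection formula for two planes meeting along a line.

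\textbf{Self-intersection.} For a plane $P\subset X$ we have $[P]^2 = c_2(N_{P/X})$. The normal sequence
\[
0\to N_{P/X}\to N_{P/\mathbb{P}^5}\to N_{X/\mathbb{P}^5}|_P\to 0,
\qquad N_{P/\mathbb{P}^5}=\mathcal{O}_P(1)^{\oplus 3},\quad N_{X/\mathbb{P}^5}|_P=\mathcal{O}_P(3),
\]
gives $c(N_{P/X}) = (1+H)^3(1+3H)^{-1}$. Hence $c_2(N_{P/X}) = 3 - 9 + 9 = 3$, which is the standard value $[P]^2=3$. We also record $h_X^2\cdot[P]=1$ and $(h_X^2)^2=3$.

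\textbf{Empty or zero-dimensional intersection.} Suppose $P_1\cap P_2$ is empty or a point. If it is empty, the product is visibly $0$. If it is a point $p$, then $T_pP_1$ and $T_pP_2$ are two $2$-planes in the $4$-dimensional space $T_pX$ that span $T_p\mathbb{P}^5$ up to the expected dimension. Two distinct linear planes in $\mathbb{P}^5$ meeting in a point span a $\mathbb{P}^4$, and their tangent spaces meet only in $0$. So the intersection is transverse inside $X$ and $[P_1]\cdot[P_2]=1$.

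\textbf{Intersection along a line.} This is the third case, and the one needing care. If $P_1\cap P_2=L$, the planes span a $\mathbb{P}^3=\Pi$, and $\Pi\cap X$ is a cubic surface containing $P_1\cup P_2$. Therefore $\Pi\cap X = P_1\cup P_2\cup P_3$ for a residual plane $P_3$, which proves existence of $P_{\text{residual}}$.

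Provided $\Pi\not\subset X$, which holds because a smooth cubic fourfold contains no $\mathbb{P}^3$ (by Lefschetz), this is a proper intersection. It gives $h_X^2 = [\Pi\cap X] = [P_1]+[P_2]+[P_3]$ in $H^4(X)$, taking $\Pi$ to be cut out by two hyperplanes.

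Intersecting with $[P_1]$ yields
\[
1 = 3 + [P_1]\cdot[P_2] + [P_1]\cdot[P_3].
\]
By symmetry the same holds with the indices permuted. Summing all three relations gives
\[
3 = 9 + 2\sum_{i<j}[P_i]\cdot[P_j],
\]
so $\sum_{i<j} [P_i]\cdot[P_j] = -3$. Combining this with the three individual equations, each of the form $[P_i][P_j]+[P_i][P_k]=-2$, gives $[P_i]\cdot[P_j]=-1$ for all pairs.

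Here $P_3$ may coincide with $P_1$ or $P_2$. In that case I would handle the relation $h_X^2=2[P_1]+[P_2]$ directly by the same kind of linear algebra, and check that it again gives $-1$.

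\textbf{Converses.} Since $0$, $1$ and $-1$ are distinct and the three cases (empty, point, line) are exhaustive for distinct planes, the equivalences follow. Two distinct planes cannot meet in anything larger than a line, and if $P_1=P_2$ the product is $3$.

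\textbf{Main obstacle.} The delicate point is the transversality claim in the point case, namely that the local intersection multiplicity is exactly $1$. I would argue this via the tangent-space computation above, using that $X$ is smooth and the planes are linear.
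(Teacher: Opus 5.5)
The paper gives no proof of this statement; it quotes it from Voisin (via Yang--Yu, Theorem 6.2). Your proposal is a self-contained proof, and it is the standard one:
\begin{itemize}
\item $[P]^2=c_2(N_{P/X})=3$;
\item transversality in the point case;
\item a residual plane in the $\mathbb{P}^3$ spanned by two planes meeting in a line, plus the three linear relations coming from $h_X^2\cdot[P_i]=1$.
\end{itemize}
This justifies directly every entry of the Gram matrix of $M$ that the paper otherwise takes on faith. The self-intersection computation is correct. So are the exclusion of $\Pi\subset X$, the factorization $F|_\Pi=\ell_1\ell_2\ell_3$, the symmetric linear algebra giving all three pairings $-1$, and the deduction of the converses from exhaustiveness of the geometric cases and distinctness of the values $0,1,-1$ (and $3$ for $P_1=P_2$).

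There are two corrections. The first is a phrasing slip in the point case. $T_pP_1\oplus T_pP_2$ is $4$-dimensional, so it equals $T_pX$, not $T_p\mathbb{P}^5$. That equality is exactly the transversality you need.

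The second matters more: the degenerate case $P_3=P_1$ does not ``again give $-1$''. From $h_X^2=2[P_1]+[P_2]$:
\begin{itemize}
\item pairing with $[P_1]$ gives $6+[P_1]\cdot[P_2]=1$, so $[P_1]\cdot[P_2]=-5$;
\item pairing with $[P_2]$ gives $2[P_1]\cdot[P_2]+3=1$, so $[P_1]\cdot[P_2]=-1$.
\end{itemize}
Your linear algebra therefore shows this case cannot occur on a smooth cubic fourfold. Geometrically, $F|_\Pi=\ell_1^2\ell_2$ would make $\Pi$ tangent to $X$ along all of $P_1$. The finite Gauss map would then send $P_1$ into the pencil of hyperplanes containing $\Pi$, which is impossible.

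You need this impossibility, not merely some value $-1$. If $P_{\text{residual}}$ could equal $P_1$, the asserted equality $[P_{\text{residual}}]\cdot[P_1]=-1$ would fail, since $[P_1]^2=3$. With that conclusion replacing ``check that it again gives $-1$'', the argument is complete.
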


Inspired by this, we define a specialized family of cubic fourfolds: 

\begin{definition}
Let $\mathcal{N}$ denote the set of all cubic fourfolds that contain four planes $P_1, P_2, P_3, P_4$ exhibiting the following intersection profile: %
\begin{itemize}
    \item $P_1 \cap P_2$ is a line. 
    \item $P_1 \cap P_3$ is a line. 
    \item $P_1 \cap P_4 = \emptyset$. 
    \item $P_2 \cap P_3$ is a point. 
\end{itemize}
For any $X \in \mathcal{N}$, the lattice $A(X)$ contains a primitive rank-5 sublattice $M$ generated by $\mathfrak{o} = h_X^2$ and the plane classes. %
Based on Theorem 2.1, the Gram matrix of $M$ takes the form: %
\begin{equation}\label{eqn:matrix for M}
M = \begin{pmatrix} 
3 & 1 & 1 & 1 & 1 \\ 
1 & 3 & -1 & -1 & 0 \\ 
1 & -1 & 3 & 1 & \alpha \\ 
1 & -1 & 1 & 3 & \beta \\ 
1 & 0 & \alpha & \beta & 3 
\end{pmatrix}
\end{equation}
where the parameters $\alpha, \beta \in \{0,1\}$ correspond to the possible orbits of plane arrangements, as detailed in Theorem 2.4.
\end{definition}

\begin{theorem}
Let $F$ be a smooth homogeneous cubic polynomial defining a fourfold $V_+(F) \in \mathcal{N}$ in $\mathbb{P}^5 = \mathbb{P}\mathbb{C}^6$. Then there exists a basis of linear forms $\{x,y,z,u,v,w\}$ for the dual space $(\mathbb{C}^6)^*$ and constants $a,b \in \mathbb{C}$ such that:
$$F \in ( x,y,z ) \cap ( x,y,u ) \cap ( x,z,v ) \cap ( v-by,u-az,w )$$
\end{theorem}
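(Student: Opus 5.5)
Since each plane is a linear subspace of $\mathbb{P}^5$ cut out by three linear forms, and $X=V_+(F)$ contains a plane $P=V_+(\ell_1,\ell_2,\ell_3)$ exactly when $F\in(\ell_1,\ell_2,\ell_3)$, the plan is to choose coordinates adapted to the four planes and then read off the ideal membership. The whole proof is linear algebra: show that the configuration of four planes with the given intersection profile is, up to $\mathrm{PGL}_6$, the normal form
$$P_1=V(x,y,z),\quad P_2=V(x,y,u),\quad P_3=V(x,z,v),\quad P_4=V(v-by,\,u-az,\,w),$$
for some $a,b\in\mathbb{C}$. The ideal statement then follows immediately, because $F$ lies in the intersection of the four ideals of the planes.

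\emph{Step 1: placing $P_1,P_2,P_3$.} Write $P_i=\mathbb{P}(W_i)$ with $W_i\subset\mathbb{C}^6$ of dimension $3$. The hypotheses are $\dim W_1\cap W_2=\dim W_1\cap W_3=2$ and $\dim W_2\cap W_3=1$. The two lines $L_{12}=P_1\cap P_2$ and $L_{13}=P_1\cap P_3$ both lie in the plane $P_1$, so they meet in a point. That point lies in $P_2\cap P_3$, so it \emph{is} the point $P_2\cap P_3$, and in particular $L_{12}\neq L_{13}$. Pick a basis as follows:
\begin{itemize}
\item $e_1$ spanning $W_2\cap W_3$;
\item $e_1,e_2$ spanning $W_1\cap W_2$, and $e_1,e_3$ spanning $W_1\cap W_3$, so that $W_1=\langle e_1,e_2,e_3\rangle$;
\item $e_4$ with $W_2=\langle e_1,e_2,e_4\rangle$, and $e_5$ with $W_3=\langle e_1,e_3,e_5\rangle$.
\end{itemize}
Then $W_1+W_2+W_3$ has dimension $3+1+1=5$ by a short dimension count, since $W_3\cap(W_1+W_2)=W_1\cap W_3$. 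This is where the point-intersection $P_2\cap P_3$ is used. Complete the basis with $e_6$. In the dual basis, and after suitably naming and ordering the coordinate forms, $P_1,P_2,P_3$ take the stated forms: $P_1$ is cut out by the three coordinates vanishing on $W_1$, and $P_2$ and $P_3$ each share two of those coordinates with $P_1$, as required.

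\emph{Step 2: placing $P_4$.} The condition $P_1\cap P_4=\emptyset$ means $W_4\oplus W_1=\mathbb{C}^6$, so $W_4$ is the graph of a linear map $\phi:\mathbb{C}^3\to W_1$ from a complement. Equivalently, $W_4$ is cut out by three forms of the shape $\text{(complement coordinate)}-\text{(linear form in the } W_1\text{ coordinates)}$. We still have freedom to change the basis preserving the flags $\langle e_1\rangle\subset\langle e_1,e_2\rangle, \langle e_1,e_3\rangle\subset W_1$ and the planes $W_2,W_3$. This includes:
\begin{itemize}
\item rescalings;
\item adding multiples of $W_1$-vectors to $e_4,e_5,e_6$, suitably constrained: $e_4$ may be modified by $W_1\cap W_2$, $e_5$ by $W_1\cap W_3$, and $e_6$ by anything.
\end{itemize}
With these moves we kill as many entries of $\phi$ as possible. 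Choosing $e_6$ inside $W_4$ (allowed) should produce the form $w$ among the defining equations. The remaining graph entries that cannot be removed should be exactly one scalar each, relating $v$ to $y$ and $u$ to $z$, giving the equations $v-by$ and $u-az$.

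\emph{Main obstacle.} The hardest step is checking that the residual entries of $\phi$ reduce to just these two parameters $a,b$ and cannot be normalized further. One must also handle degenerate possibilities, such as $W_4$ meeting $W_2$ or $W_3$ in a point or a line (the cases $\alpha,\beta$). I would first count orbits: the stabilizer of $(W_1,W_2,W_3)$ acting on graphs $\phi\in\mathrm{Hom}(\mathbb{C}^3,\mathbb{C}^3)$ is triangular-type. I would then row-reduce $\phi$ by hand, verifying that the unremovable entries sit in the $(y\to v)$ and $(z\to u)$ positions. Smoothness of $F$ is used only to guarantee that the planes are genuinely as described, i.e.\ that Theorem 2.1 applies. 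The arrangement may allow the parameters $a,b$ to be zero, which covers the $\alpha,\beta$ cases uniformly.
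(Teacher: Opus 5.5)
Your strategy is the same as the paper's: put $P_1,P_2,P_3$ in adapted coordinates, then normalize $P_4$ using the changes of basis that fix the first three planes. Step~1 is correct and more explicit than the paper's version. The one assertion you leave unproved, $W_3\not\subset W_1+W_2$, holds because otherwise $W_2$ and $W_3$ would be $3$-spaces inside the $4$-space $W_1+W_2$, forcing $\dim W_2\cap W_3\ge 2$.

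The gap is Step~2, which is the entire content of the normal form: it is only predicted (``should produce'', ``I would row-reduce by hand'') and never carried out. It follows at once from the moves you listed. Let $f_j\in W_4$ be the unique lift of $e_j$ modulo $W_1$ ($j=4,5,6$), and write
\[
f_4=e_4+\lambda_1e_1+\lambda_2e_2+a\,e_3,\qquad f_5=e_5+\mu_1e_1+b\,e_2+\mu_3e_3,\qquad f_6=e_6+\nu_1e_1+\nu_2e_2+\nu_3e_3 .
\]
Make three replacements:
\[
e_4':=e_4+\lambda_1e_1+\lambda_2e_2\in W_2,\qquad e_5':=e_5+\mu_1e_1+\mu_3e_3\in W_3,\qquad e_6':=f_6 .
\]
These preserve $W_1,W_2,W_3$ and give $W_4=\langle e_4'+a\,e_3,\ e_5'+b\,e_2,\ e_6'\rangle$. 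Let $(w,v,u,z,y,x)$ be the basis dual to $(e_1,e_2,e_3,e_4',e_5',e_6')$. Then $P_1,P_2,P_3$ keep their forms and $P_4=V(v-by,\,u-az,\,w)$. Dually, $x,y,z$ are unchanged, while $u$ moves by an element of $\langle x,y\rangle$ and $v$ by one of $\langle x,z\rangle$.

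The paper's proof also only asserts this step. Its phrase ``preserving our established basis elements'' cannot be taken literally: with the original $v,y$, the form $v-by$ vanishes on $f_4$ only if $\lambda_2=0$. So writing out these lines would make your argument more complete than the paper's.

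Several side remarks in Step~2 also need correcting.
\begin{itemize}
\item Taking $e_6\in W_4$ does not by itself produce the equation $w$. Here $w$ is the coordinate dual to $e_1$, so $W_4\subset V(w)$ also requires the $e_1$-coefficients of $f_4$ and $f_5$ to vanish. That is exactly what the modifications of $e_4$ and $e_5$ achieve.
\item Showing that $a,b$ ``cannot be normalized further'' is unnecessary for an existence statement, and it is false. Rescaling $e_3$ (resp.\ $e_2$) rescales $a$ (resp.\ $b$), so nonzero values can be set to $1$; this is why there are only four $\mathrm{PGL}_6$-orbits.
\item The plane $P_4$ can never meet $P_2$ or $P_3$ in a line. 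Such a line would meet the line $P_1\cap P_2$ (resp.\ $P_1\cap P_3$) inside that plane, contradicting $P_1\cap P_4=\emptyset$.
\item The point cases need no separate treatment. Since $W_4\cap W_2\subseteq W_4\cap(W_1+W_2)=\langle f_4\rangle$, $P_4$ meets $P_2$ exactly when $a=0$; likewise it meets $P_3$ exactly when $b=0$.
\item Neither an orbit count nor smoothness of $F$ plays any role. The statement is pure linear algebra on the incidence profile.
\end{itemize}
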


\begin{proof}
Let $X = V_+(F) \in \mathcal{N}$, and let $P_1,P_2,P_3,P_4 \subset \mathbb{P}^5$ be the four planes exhibiting the required incidence profile. Each plane $P_i$ is the projectivization of a 3-dimensional vector subspace $V_i \subset \mathbb{C}^6$. The defining ideal $I(P_i)$ is generated by the 3-dimensional subspace of linear forms in $(\mathbb{C}^6)^*$ that vanish on $V_i$.

Since $P_1$ is a plane, we can choose three linearly independent linear forms $x,y,z \in (\mathbb{C}^6)^*$ such that $I(P_1) = ( x,y,z )$. 

Because the intersection $P_1 \cap P_2$ is a line, the sum $I(P_1) + I(P_2)$ must be generated by exactly 4 independent linear forms. Therefore, $I(P_2)$ shares a 2-dimensional subspace of linear forms with $I(P_1)$. We can choose the generators $x,y$ to span this intersection and extend with a new independent linear form $u$ such that $I(P_2) = ( x,y,u )$.

Similarly, $P_1 \cap P_3$ is a line, so $I(P_3)$ also shares a 2-dimensional subspace of linear forms with $I(P_1)$. For the intersection $P_2 \cap P_3$ to be exactly a point, the sum $I(P_2) + I(P_3)$ must be generated by 5 independent linear forms. This forces the shared forms between $I(P_1)$ and $I(P_3)$ to be spanned by $x$ and a form independent of $y$, which we take to be $z$. Extending with a fifth independent linear form $v$, we obtain $I(P_3) = ( x,z,v )$. Notice that $I(P_2) + I(P_3) = ( x,y,z,u,v )$, which correctly cuts out a point.

Finally, because $P_1 \cap P_4 = \emptyset$, the sum $I(P_1) + I(P_4)$ must be generated by the entire 6-dimensional dual space $(\mathbb{C}^6)^*$. This allows us to choose the remaining basis element $w$ such that $\{x,y,z,u,v,w\}$ forms a complete basis for $(\mathbb{C}^6)^*$. By applying suitable linear changes of basis to the generators of $I(P_4)$ while preserving our established basis elements, we can put the ideal of $P_4$ into the canonical form:
$$I(P_4) = ( v-by,u-az,w )$$
for some constants $a,b \in \mathbb{C}$. Because the fourfold $X = V_+(F)$ contains all four planes $P_i$, the polynomial $F$ must vanish on each $P_i$, placing it precisely in the intersection of their respective ideals:
$$F \in ( x,y,z ) \cap ( x,y,u ) \cap ( x,z,v ) \cap ( v-by,u-az,w ).$$
This completes the proof.
\end{proof}
Now we show that this construction dominates the irreducible sixteen dimensional subvariety $\mathcal{C}_M$.
\begin{theorem}
$\dim(\mathcal{N}) = 16$, implying that the generic element of $\mathcal{C}_M$ is contained in $\mathcal{N}$. %
\end{theorem}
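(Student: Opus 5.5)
We plan to compute $\dim(\mathcal{N})$ by a parameter count, using the normal form of Theorem 2.4 to describe the relevant cubics explicitly. Consider the incidence variety
$$\widetilde{\mathcal{N}} = \{(F, P_1,\dots,P_4) : P_i \subset V_+(F),\ (P_1,\dots,P_4) \text{ has the profile of Definition 2.3}\}.$$
It maps to the space of configurations of four planes and to the space of cubics. The image in $\mathcal{C} = \mathbb{P}(\mathrm{Sym}^3(\mathbb{C}^6)^*)/\mathrm{PGL}_6$ is $\mathcal{N}$.

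\textbf{Configuration space.} The plane $P_1$ varies in $G(3,6)$, of dimension $9$. The plane $P_2$ meets $P_1$ in a line, giving $2+3=5$ parameters: a line in $P_1$ and a plane through it. Likewise $P_3$ contributes $5$. The requirement that $P_2\cap P_3$ be a point is an open condition, which holds once the two lines in $P_1$ are distinct. The plane $P_4$ disjoint from $P_1$ is open in $G(3,6)$, giving $9$. The total is $28$.

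Alternatively, Theorem 2.4 shows that modulo $\mathrm{PGL}_6$ (dimension $35$) only the two parameters $a,b$ remain. Either count should give the same orbit picture, and I would use the normal form to make it concrete.

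\textbf{Fibre.} For fixed $a,b$, we need the dimension of the degree-$3$ piece of the ideal
$$I = (x,y,z)\cap(x,y,u)\cap(x,z,v)\cap(v-by,u-az,w).$$
We have $\dim \mathrm{Sym}^3 = 56$. The claim to verify is that four planes in this configuration impose independent conditions on cubics, each plane imposing $h^0(\mathcal{O}_{\mathbb{P}^2}(3)) = 10$ conditions, with corrections for the pairwise intersections. Concretely, $h^0(\mathcal{O}_{P_1\cup\cdots\cup P_4}(3))$ should be computed from a Mayer–Vietoris count:
$$4\cdot 10 - 4 - 4 - 1 = 31,$$
subtracting the cubics on the two lines $P_1\cap P_2$ and $P_1\cap P_3$ and the point $P_2\cap P_3$. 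One must also check whether $P_4$ meets $P_2$ or $P_3$; this depends on $a,b$ and corresponds to $\alpha,\beta$. This gives $\dim I_3 = 56-31 = 25$, or slightly more.

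I expect the \textbf{main obstacle} to be justifying that $H^1$ of the ideal sheaf of the union vanishes in degree $3$, i.e.\ that the union is $3$-normal. I would first try to verify this directly by writing an explicit monomial-type basis of $I_3$ in the coordinates $x,y,z,u,v,w$. Since the first three ideals are monomial, one can compute $(x,y,z)\cap(x,y,u)\cap(x,z,v) = (x,\, yz,\, yv,\, zu,\, uv\cdots)$ explicitly, intersect with the fourth ideal, and count.

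\textbf{Assembling the count.} We then have $\dim\{(F,\text{config})\} = 28 + (\dim I_3 - 1)$. Assuming a general such $F$ contains only finitely many such configurations, subtracting $\dim \mathrm{PGL}_6 = 35$ gives
$$\dim \mathcal{N} = 28 + 24 - 35 = 17 \text{ or so}.$$
This already signals that the pairwise intersection corrections must be handled carefully, and that the true count should come out to $16$.

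In practice, the route I would trust is to work at the level of periods instead. The variety $\mathcal{N}$ lies in $\mathcal{C}_M$ for the rank-$5$ lattice $M$, so $\dim \mathcal{N} \le 20 - 4 = 16$. For the lower bound, I would show that the fibre computation above gives at least $16$. Irreducibility of $\mathcal{C}_M$, assumed from the earlier work, then shows that $\mathcal{N}$ contains a dense open subset of $\mathcal{C}_M$, so the generic element of $\mathcal{C}_M$ lies in $\mathcal{N}$.

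Finally, smoothness of a general such $F$ must be checked, for instance by exhibiting one explicit smooth member. This guarantees that $\mathcal{N}$ is nonempty inside the smooth moduli space.
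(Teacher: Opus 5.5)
\textbf{The gap is in the fibre computation.} Your overall strategy matches the paper's: an incidence variety over the $28$-dimensional configuration space, with fibre the linear system of cubics through the four planes, generic finiteness, and subtracting $\dim \mathrm{PGL}_6 = 35$. But the Mayer--Vietoris count $40-4-4-1=31$ is wrong, and that error is exactly why you land on an impossible $17$.

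The point $P_2\cap P_3$ is not a new point. The lines $L_2=P_1\cap P_2$ and $L_3=P_1\cap P_3$ are distinct lines in the plane $P_1$, so they meet in a point $q$, and $q\in P_2\cap P_3$. Hence $P_2\cap P_3=\{q\}$ is a triple point of $P_1\cup P_2\cup P_3$; in the normal form, $q=V(x,y,z,u,v)$. Agreement of sections at $q$ is already forced by agreement along $L_2$ and $L_3$. So the compatible data on $P_1\cup P_2\cup P_3$ has dimension $30-8=22$, not $21$.

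Your proposed explicit check confirms this once it is done correctly. One has $(x,y,z)\cap(x,y,u)\cap(x,z,v)=(x,yz,yv,zu)$; note $uv$ is not in it, since it does not vanish on $P_1$. The degree-$3$ quotient has exactly $22$ monomials. Modulo $(x,w,v-by,u-az)$ this ideal becomes $(yz,by^2,az^2)\subset\mathbb{C}[y,z]$, which contains all cubics if and only if $ab\neq 0$.

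So generically ($P_4$ disjoint from $P_2,P_3$) there are $32$ conditions and $\dim I_3=24$, not ``$25$ or slightly more''. The fibre is $\mathbb{P}^{23}$, and $28+23-35=16$. When $a=0$ or $b=0$, $\dim I_3$ rises by one for each. But $P_4$ meeting $P_2$ (resp.\ $P_3$) is a Schubert divisor condition, so the configuration stratum drops by the same amount. Every stratum of the incidence variety therefore has dimension $51$, which is the paper's total.

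\textbf{Consequence for your argument.} As written, the proposal gives no valid lower bound. Your fallback, ``the fibre computation above gives at least $16$'', rests on a count that actually produced $17$, contradicting your own upper bound. That contradiction signals an error; it does not establish the lower bound.

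Also, $3$-normality is not the real obstacle. For the lower bound you only need $\dim I_3\ge 24$, and this follows from the upper bound of $32$ on independent conditions, once the redundancy at $q$ is noticed. Add generic finiteness (a smooth cubic contains finitely many planes) and the existence of one smooth member, which you rightly flag and the paper does not check. The exact value of $\dim I_3$ then comes from the ideal computation above.

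With this correction, your sandwich argument works: $\dim\mathcal{N}\le 20-4=16$ from $\operatorname{rk} M=5$, and $\ge 16$ from the count. Irreducibility of $\mathcal{C}_M$ then gives density. This is slightly more robust than the paper's direct equality $51-35=16$, because it only needs the one-sided bound on conditions rather than the exact fibre dimension.
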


\begin{proof}
Consider the parameter space of the plane arrangements $\mathcal{P} := \{(P_1, P_2, P_3, P_4) \mid P_i \cap P_j \text{ is as required}\} \subset \text{Gr}(3,6)^4$. %
The group $\text{PGL}(6, \mathbb{C})$ acts on $\mathcal{P}$ diagonally. %
Up to this action, the planes can be classified by $\alpha, \beta \in \{0, 1\}$. %
The dimension of the $\text{PGL}(6, \mathbb{C})$-orbit $O_{\alpha,\beta}$ is $28 - \delta_{\alpha 0} - \delta_{\beta 0}$. %

Analyzing the incidence correspondence $S := \{((P_1,\dots,P_4), X) \mid \bigcup P_i \subset X\} \subset \mathcal{P} \times |\mathcal{O}_{\mathbb{P}^5}(3)|$, the fiber dimension over a point in $O_{\alpha,\beta}$ is $23 + \delta_{\alpha 0} + \delta_{\beta 0}$. %
Therefore, the total dimension of $p_1^{-1}(O_{\alpha,\beta})$ is $51$. %
Since a smooth cubic fourfold contains at most 405 planes \cite{DIO}, the projection to the moduli space of cubics is quasi-finite. %
Factoring out the 35-dimensional $\text{PGL}(6, \mathbb{C})$ action yields $51 - 35 = 16$ for the image $\mathcal{N}$ in the moduli space $\mathcal{C}$. %
\end{proof}

\begin{remark}
While technically we should have written $\mathcal{N}_{\alpha,\beta}$ and $M_{\alpha,\beta}$ as the parameter space $\mathcal{P}$ splits into four distinct $\text{PGL}(6, \mathbb{C})$-orbits corresponding to $(\alpha, \beta) \in \{(0,0), (1,0), (0,1), (1,1)\}$, these four configurations define the exact same abstract lattice $M$ up to isometry, and thus collapse into a single irreducible component in the moduli space $\mathcal{C}$.

Algebraically, if we denote the plane classes as $a = [P_1]$, $b = [P_2]$, $c = [P_3]$, and $d = [P_4]$, the $(0,0)$ configuration corresponds to $d\cdot b = 0$ and $d\cdot c = 0$. Using the relation $h_X^2 = [P_1] + [P_2] + [P_{\text{residual}}]$ from Theorem 2.1, we can define new classes $b' := \mathfrak{o} - a - b$ and $c' := \mathfrak{o} - a - c$. Substituting these transforms the intersection numbers from $0$ to $1$, demonstrating that all four intersection matrices are integer basis changes of the same lattice $M$.

Geometrically, Voisin's theorem implies that the hyperplane $H \cong \mathbb{P}^4$ spanned by the intersecting planes $P_1$ and $P_2$ cuts out a cubic threefold in $X$ that splits into $P_1 \cup P_2 \cup P_{\text{residual}}$. If $P_4$ is disjoint from $P_2$, it must intersect $P_{\text{residual}}$ in a point. Thus, the four orbits merely represent choices of different planes within the exact same family of cubic fourfolds.
\end{remark}

\section{ADC Forms and Hassett-Maximality}
Now that we have established that $\mathcal{N}$ fills an irreducible subvariety $\mathcal{C}_M$ of dimension $16$, it remains to prove that $\mathcal{C}_M$ is an irreducible component of the intersection of all Hassett divisors $\mathcal{Z}$.  For this we must analyze the lattice $L(M)$, see \cite[Definition 3.2]{GalNuer}. %
By utilizing the distinguished element $\mathfrak{o}$, we obtain a rank-4 lattice with the associated positive-definite quadratic form: 
$$F(x,y,z,u) = 4(2x^2+2y^2+2z^2-2xy-2xz+yz)+2u(4u-x-y-z)$$ %
which can be diagonalized as: 
$$F(x,y,z,u) = \frac{1}{8}((8x-4y-4z-u)^2+3(4y-u)^2+3(4z-u)^2+57u^2)$$ %

We must prove that the primitive image $PIm(F)$ is exactly $\mathcal{H}$. %
To do so, we deploy the foundational theory of ADC forms. 

\begin{definition}
An integral quadratic form is called an \textbf{ADC form} if any integer representable by a rational vector is also representable by an integer vector. %
\end{definition}

\begin{theorem}\label{Thm:Q and G are ADC}
The ternary quadratic form $Q_3(x,y,z) = x^2+y^2+3z^2$ and the form $G(x,y,z) = x^2+3y^2+3z^2$ are ADC forms. %
\end{theorem}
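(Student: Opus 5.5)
The plan is to prove both claims with the Davenport--Cassels criterion (Aubry--Davenport--Cassels), which is sufficient for a form to be ADC. Let $f$ be a positive-definite integral form with associated symmetric bilinear form, and suppose that for every $\mathbf{x}\in\mathbb{Q}^n$ there is $\mathbf{y}\in\mathbb{Z}^n$ with $f(\mathbf{x}-\mathbf{y})<1$. Then $f$ is ADC. The argument is the classical descent. Suppose $m=f(\mathbf{x})$ with $\mathbf{x}\in\mathbb{Q}^n\setminus\mathbb{Z}^n$, and write $\mathbf{x}=\mathbf{c}/t$ with minimal denominator $t>1$. Pick $\mathbf{y}$ as above and let $\mathbf{x}'$ be the second intersection of the line through $\mathbf{x}$ and $\mathbf{y}$ with the quadric $f=m$. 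A direct computation gives $\mathbf{x}'=\mathbf{x}'' /t'$ with $t'=t\,f(\mathbf{x}-\mathbf{y})<t$ a positive integer. Iterating, the denominator decreases strictly, so we reach an integral representation. For diagonal forms this descent is standard, and we will only need to check that the new denominator $t'$ is a positive integer; this holds because $t'=(f(\mathbf{c})-2B(\mathbf{c},\mathbf{y})t+f(\mathbf{y})t^2)/t$ and $f(\mathbf{c})=mt^2$.

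The main step is therefore to verify the covering condition for $Q_3=x^2+y^2+3z^2$ and $G=x^2+3y^2+3z^2$. The easy estimate fails. Rounding each coordinate to the nearest integer gives errors $|\epsilon_i|\le 1/2$, so the worst values are $1/4+1/4+3/4=5/4$ for $Q_3$ and $1/4+3/4+3/4=7/4$ for $G$. Both exceed $1$, so I must use the freedom to round a coordinate the other way, or use the weaker form of the criterion that only requires $f(\mathbf{x}-\mathbf{y})<1$ or $f(\mathbf{x}-\mathbf{y})$ to be exactly handled.

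I would instead invoke the refined criterion (Cassels' version, as used e.g. by Clark et al.). It suffices that for each $\mathbf{x}\in\mathbb{Q}^n\setminus\mathbb{Z}^n$ there is $\mathbf{y}\in\mathbb{Z}^n$ with $0<f(\mathbf{x}-\mathbf{y})<1$, or that the descent still strictly decreases the denominator. Failures occur only when $f(\mathbf{x}-\mathbf{y})=1$ exactly, and the "Euclidean" condition can be relaxed to $f(\mathbf{x}-\mathbf{y})\le 1$ with equality only at points where a separate argument works.

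For $Q_3$, I would first try a case split on $\epsilon_3=z-\mathrm{round}(z)$.
\begin{itemize}
\item If $|\epsilon_3|<1/2$, then $3\epsilon_3^2<3/4$ and $\epsilon_1^2+\epsilon_2^2\le 1/2$, but the total can still reach up to $5/4$. So I will exploit the lattice structure and not just coordinates.
\item The cleaner route is to note that $x^2+y^2+3z^2$ and $x^2+3y^2+3z^2$ are each alone in their genus, since their class numbers are $1$ (checkable from Brandt--Intrau tables). A classical theorem says every form of class number one is ADC: a rational representation together with the Hasse principle gives local integral representations at all $p$, and class number one upgrades these to a global integral representation.
\end{itemize}
So the key steps are the following. (1) Compute the discriminants, namely $3$ for $Q_3$ and $9$ for $G$ up to normalisation. (2) Verify that the genus has one class, by reduction theory: enumerate reduced ternary forms of that determinant and show that the others are not locally equivalent at $p=2,3$. (3) Show that rational representability implies $\mathbb{Z}_p$-representability for all $p$. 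This is automatic for $p\nmid 6$, since the form is unimodular there and $\mathbb{Z}_p$-unimodular forms of rank $\ge 3$ represent all of $\mathbb{Z}_p$ integrally. At $p=2$ and $p=3$ it needs a local computation, which is where I expect the real work.

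The obstacle is exactly the $p=2,3$ local analysis, showing that any $m\in\mathbb{Z}$ represented over $\mathbb{Q}_p$ is represented over $\mathbb{Z}_p$. At $p=3$ I would split $m=3^k m_0$ and use the Jordan decomposition $\langle1,1\rangle\perp\langle3\rangle$ (respectively $\langle1\rangle\perp\langle3,3\rangle$), then reduce $k$ by scaling, since a representation of $9m'$ yields one of $m'$ up to units. At $p=2$ I would use the diagonal unit form and Hensel lifting modulo $8$.
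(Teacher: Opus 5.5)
Your argument is correct in outline, but it follows a different route from the paper's.

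\textbf{How the two routes differ.} The paper does not prove the ADC property of $Q_3$ in the main text. It cites Clark--Jagy, with a descent sketch in the appendix. It then gets $G$ almost for free from the identity $Q_3(3y,3z,x)=3G(x,y,z)$. A rational representation of $n$ by $G$ gives an integral one $A^2+B^2+3C^2=3n$. Since $-1$ is not a square mod $3$, this forces $3\mid A,B$, hence $n=G(C,A/3,B/3)$.

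You instead run a local--global argument for each form separately: class number one, plus the local ADC property at every prime. Both ingredients do hold here.
\begin{itemize}
\item Class number of $Q_3$: up to isometry, the only other positive integral ternary lattice of determinant $3$ is $\langle 1\rangle\perp\begin{pmatrix}2&1\\1&2\end{pmatrix}$. Its $3$-adic unimodular component has nonsquare determinant $2$, so it lies in a different genus.
\item Class number of $G$: you can skip the determinant-$9$ enumeration. The map $L\mapsto L^{\#}$, with the form scaled by $3$, is a bijection on classes from the genus of $\langle 1,3,3\rangle$ to the genus of $\langle 1,1,3\rangle$. Alternatively, just use the paper's reduction.
\item At $p=2$: both forms are $\mathbb{Z}_2$-universal. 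Hensel-lift mod $8$ along a coordinate with $v_2(\partial f)=1$, using $1,3,1+4,4+3,2,1+1+12$ for $Q_3$ and $1,3,1+12,7,4+3+3,6$ for $G$.
\item At $p=3$: both forms are anisotropic and miss exactly the square class of $-3$, respectively $-1$. Every other class of valuation $0$ or $1$ is hit integrally by a Jordan component, and $f(3v)=9f(v)$ handles higher valuations.
\end{itemize}

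\textbf{What each route buys.} Yours gives a self-contained proof of the fact the paper only cites. It also pins the possible obstructions to $p=2,3$. Your covering computation ($5/4$ and $7/4$) shows that neither form is Euclidean, so a verbatim Davenport--Cassels descent cannot be the mechanism. The paper's route costs nothing for $G$ once $Q_3$ is known.

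\textbf{Two corrections.}
\begin{itemize}
\item The ``classical theorem'' as you state it is false. You claim that class number one implies ADC because the Hasse principle supplies local integral representations. Counterexample: $x^2+y^2+4z^2$ is alone in its genus, and $3=1+1+4(\tfrac12)^2$, yet $3$ is not represented integrally, not even over $\mathbb{Z}_2$. The true statement requires class number one (or just regularity) together with the local ADC property at every $p$. So your step (3) is an indispensable hypothesis, not a consequence of the Hasse principle. You do list it, and it holds as shown above.
\item Discard the relaxed-Euclidean criterion from your second paragraph. Failures are not confined to $f(\mathbf{x}-\mathbf{y})=1$: the minimal distance exceeds $1$ on a whole neighbourhood of $(\tfrac12,\tfrac12,\tfrac12)+\mathbb{Z}^3$. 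No treatment of the equality case can rescue the descent.
\end{itemize}
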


\begin{proof}
The fact that $Q_3(x,y,z) = x^2+y^2+3z^2$ is an ADC form is a classically established property of regular ternary forms (for example see \cite[Table 2]{ClarkJagy}); a self-contained, geometric proof extending Davenport-Cassels descent via torus partitioning is provided for completeness in Appendix \ref{app:adc}. 

To see that $G$ inherits the ADC property directly from $Q_3$, observe the algebraic identity:
$$Q_3(3y,3z,x) = (3y)^2 + (3z)^2 + 3x^2 = 3(x^2+3y^2+3z^2) = 3G(x,y,z)$$
Suppose $n \in \mathbb{Z}$ is represented rationally by $G$, meaning $3n$ is represented rationally by $Q_3$. Because $Q_3$ is an ADC form, there exist integers $A,B,C \in \mathbb{Z}$ such that $Q_3(A,B,C) = 3n$, or $A^2+B^2+3C^2 = 3n$. Reducing this equation modulo 3 yields $A^2+B^2 \equiv 0 \pmod 3$. Since $-1$ is not a quadratic residue modulo 3, this requires $A \equiv 0 \pmod 3$ and $B \equiv 0 \pmod 3$. Writing $A=3Y$ and $B=3Z$ for integers $Y,Z$, and letting $X=C$, we obtain $3n = Q_3(3Y,3Z,X) = 3G(X,Y,Z)$. Dividing by 3 gives $n = G(X,Y,Z)$ over the integers, confirming that $G$ is an ADC form. %
\end{proof}
\begin{theorem}
$PIm(F) = \mathcal{H}$. %
\end{theorem}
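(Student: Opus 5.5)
The plan is to prove the two inclusions $PIm(F)\subseteq\mathcal{H}$ and $\mathcal{H}\subseteq PIm(F)$ separately. The first uses only congruences and a finite check. The second reduces, via a well-chosen sublattice, to the ADC property of $G$ from Theorem~\ref{Thm:Q and G are ADC}.

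\emph{Inclusion $PIm(F)\subseteq\mathcal{H}$.} From the expanded form, every coefficient of $F$ is even, so $F$ only takes even values. Reducing the diagonalization
$$8F=(8x-4y-4z-u)^2+3(4y-u)^2+3(4z-u)^2+57u^2$$
modulo $3$ gives $2F\equiv(8x-4y-4z-u)^2\in\{0,1\}\pmod 3$. Hence $F\equiv 0,2\pmod 3$, and therefore $F\equiv 0,2\pmod 6$. It remains to exclude the values $2$ and $6$ (the value $4\equiv 1 \pmod 3$ is already excluded). If $F\le 6$, then $57u^2\le 48$ forces $u=0$. Then $3(4y)^2\le 48$ and $3(4z)^2\le 48$ force $|y|,|z|\le 1$, and $x$ is bounded likewise. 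A short finite enumeration then shows that no primitive vector gives $2$ or $6$. Since $F(1,0,0,0)=8$, the minimum on primitive vectors is $8$.

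\emph{Inclusion $\mathcal{H}\subseteq PIm(F)$.} On the slice $u=0$ we have
$$F(x,y,z,0)=2\bigl((2x-y-z)^2+3y^2+3z^2\bigr)=2\,G(2x-y-z,y,z).$$
So for $n=2m\in\mathcal{H}$, with $m\ge 4$ and $m\equiv 0,1\pmod 3$, it suffices to represent $m=G(X,Y,Z)$ with $X\equiv Y+Z\pmod 2$, since then $x=(X+Y+Z)/2$ is an integer. The argument proceeds in three steps.

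First, check the local conditions, i.e.\ that $m$ is represented by $G$ over $\mathbb{Q}$. Over $\mathbb{Q}_3$ the condition $m\not\equiv 2\pmod 3$ is exactly what is needed. At $p=2$ and the other primes a Hasse--Minkowski computation is required. A possible $2$-adic exception of the form $4^k(8l+r)$ must be checked carefully; in that case one moves to a slice with $u\neq 0$.

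Second, apply Theorem~\ref{Thm:Q and G are ADC} to obtain an integral representation $m=G(X,Y,Z)$.

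Third, adjust the solution to satisfy the parity condition. Note that $G\equiv X^2+Y^2+Z^2\pmod 2$, and use the sign changes and the swap $Y\leftrightarrow Z$ to permute residues. If $m$ is odd, the parity condition holds automatically: $X+Y+Z\equiv m \pmod 2$ is then odd, so at most one coordinate is even, and that forces $X\equiv Y+Z\pmod 2$.

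\emph{Main obstacle: primitivity and the even case.} The ADC representation may be imprimitive, and for $m$ even the parity congruence may fail. The plan is to treat these residual cases on the slice $u=1$, where primitivity is automatic. There one needs
$$8n-57=A^2+3B^2+3C^2,\qquad B\equiv C\equiv -1\pmod 4,\quad A\equiv -4y-4z-1\pmod 8.$$
First try to represent $8n-57$ by $G$ via Theorem~\ref{Thm:Q and G are ADC}; the local conditions there are easy since $8n-57$ is odd and $\equiv 0,1 \pmod 3$ when $n\equiv0,2\pmod 6$. Then use the residues modulo $8$, together with the sign symmetries $A\mapsto -A$, $B\mapsto -B$, $C\mapsto -C$, to force the required congruences. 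Any remaining failure would be handled by replacing a representation with a $2$-adically equivalent one, or by passing to the slice $u=-1$. This step is the one I expect to require the most case analysis.
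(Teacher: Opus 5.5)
Your first inclusion is correct, and it is more self-contained than the paper, which just cites Yang--Yu. The second inclusion has a genuine gap.

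\textbf{The slice $u=0$.} Your parity claim there is backwards. Since $X+Y+Z\equiv X^2+3Y^2+3Z^2=m \pmod 2$, the condition $X\equiv Y+Z\pmod 2$ holds precisely when $m$ is even, and never when $m$ is odd. Your assertion that an odd sum leaves at most one even coordinate already fails for $(1,0,0)$. Equivalently, $F(x,y,z,0)=8x^2+8y^2+8z^2-8xy-8xz+4yz$ is always divisible by $4$. So $u=0$ cannot produce any $n\equiv 2\pmod 4$ in $\mathcal{H}$, starting with $n=14$. The ``residual'' case is therefore half of $\mathcal{H}$, and your $u=1$ sketch has to carry the proof. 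As written, it does not.

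\textbf{The slice $u=1$: the $3$-adic condition.} Your $3$-adic analysis is incorrect. Over $\mathbb{Q}_3$ the form $G=\langle 1,3,3\rangle$ fails to represent exactly the numbers $9^k(3l+2)$. So neither ``$\not\equiv 2 \pmod 3$'' nor ``$\equiv 0,1\pmod 3$'' is sufficient.
\begin{itemize}
\item On $u=0$ this already excludes $n=36$, since $m=18$.
\item On $u=\pm1$, for $n=18j+6$ one has $8n-57=9(16j-1)$. For $n=114$ this is $9\cdot 95$, which $G$ does not represent even $3$-adically.
\item The slice $u=-1$ gives the same number $8n-57u^2$, so it does not help.
\end{itemize}
Hence $114\in\mathcal{H}$ is reached by none of your slices. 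The paper treats $n\equiv 6\pmod{18}$ by passing to $u=-3$. There $8n-513\equiv 3\pmod 9$, and primitivity follows from $9\nmid n$. The values $24,42,60$, where $8n-513<0$, are covered by explicit vectors. By contrast, there is no $2$-adic obstruction at all: $G$ represents $7$, hence every square class of $\mathbb{Q}_2$.

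\textbf{The slice $u=1$: odd representations.} You need a representation $8n-57=A^2+3B^2+3C^2$ with $A,B,C$ all odd. Neither the ADC property nor sign changes provides this, since signs do not change parity. The paper proves $G((2\mathbb{Z}+1)^{\oplus 3})=G(\mathbb{Z}^{\oplus 3})\cap(8\mathbb{N}+7)$ using the substitution $(X,Y)\mapsto(X+3Y,X-Y)$ on $X^2+3Y^2$.

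\textbf{The mod $8$ condition is not an obstacle.} The congruence on $A$ that you single out as the main difficulty is automatic. Write $A=4a-1$, $B=4b-1$, $C=4c-1$. Then $n=2(a^2+3b^2+3c^2)-(a+3b+3c)+8$, so $n$ even forces $a+b+c$ even. That is exactly your condition $A\equiv -B-C-3\pmod 8$. This is the paper's observation that $F(x/2,y,z,u)\equiv xu\pmod 2$.
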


\begin{proof}
It is immediate that $F(\mathbb{Z}^{\oplus 4}) \subset \mathcal{H}$ since the associated lattice $M$ satisfies the conditions for lattice polarizability from \cite[Theorem 5.1]{YangYu}. %
To establish that every element of $\mathcal{H}$ is primitively represented, we first explicitly construct primitive vectors for the lowest exceptional values:
\begin{itemize}
    \item $v_{24} = (1,0,-1,0)$ yields $F(v_{24}) = 24$, %
    \item $v_{42} = (0,1,-2,1)$ yields $F(v_{42}) = 42$, %
    \item $v_{60} = (1,3,-1,0)$ yields $F(v_{60}) = 60$. %
\end{itemize}
For the remaining values $n \in \mathcal{H} \setminus \{24,42,60\}$, we evaluate $F$ on vectors with a half-integer first coordinate. %
Specifically, observing that $F\left(\frac{x}{2},y,z,u\right) \equiv xu \pmod 2$, %
it follows that for any vector $v = \left(\frac{x}{2},y,z,u\right) \in \frac{\mathbb{Z}}{2} \times \mathbb{Z}^{\oplus 2} \times (2\mathbb{Z}+1)$, the image $F(v)$ is an even integer if and only if $x$ is an even integer, which implies $v \in \mathbb{Z}^{\oplus 4}$. %
Furthermore, if $u \in \{1,-3\}$, any common divisor of the coordinates of $v$ must divide $u$. %
When $u=1$, the vector is automatically primitive; when $u=-3$, any common divisor must divide $3$, but since we will ensure $F(v) = n \not\equiv 0 \pmod 9$, the vector cannot be divisible by $3$ and is thus primitive. %

We define the linear transformation $T : \mathbb{Z}^{\oplus 4} \to \mathbb{Z}^{\oplus 3}$ by:
$$T(x,y,z,u) := 4(x-y-z, y, z) - u(1,1,1)$$ %
which satisfies the fundamental algebraic identity:
$$F\left(\frac{x}{2},y,z,u\right) = \frac{G(T(x,y,z,u)) + 57u^2}{8}$$ %
Because the linear map $(x,y,z) \mapsto (x-y-z,y,z)$ is an integer isomorphism, we have $T(\mathbb{Z}^{\oplus 3} \times \{u\}) = (4\mathbb{Z}+u)^{\oplus 3}$. %
Consequently, representing $n \in \mathcal{H}$ amounts to finding an integer solution to:
$$8n - 57u^2 \in G((4\mathbb{Z}+u)^{\oplus 3})$$ %
We partition the target values based on their residues modulo $18$ and choose $u \in \{1,-3\}$ accordingly: %
$$\mathcal{K} := \{8n-57 : n \in \mathcal{H}, n \not\equiv 6 \pmod{18}\} \cup \{8n-513 : n \in \mathcal{H}, n > 60, n \equiv 6 \pmod{18}\}$$ %
Every integer $k \in \mathcal{K}$ satisfies four core arithmetic properties: %
\begin{enumerate}
    \item $k > 0$, since $n \ge 8$ for $u=1$ and $n > 60$ for $u=-3$. %
    \item $k \equiv 7 \pmod 8$, because $8n - 57 \equiv -57 \equiv 7 \pmod 8$ and $8n - 513 \equiv -513 \equiv 7 \pmod 8$. %
    \item $k \equiv 0,1 \pmod 3$, since $57, 513 \equiv 0 \pmod 3$ and $n \equiv 0,2 \pmod 3$. %
    \item $k \not\equiv 0 \pmod 9$. For $u=1$, $8n-57 \equiv -n-3 \pmod 9$, which is divisible by $9$ if and only if $n \equiv 6 \pmod{18}$, a case explicitly excluded; for $u=-3$, $8n-513 \equiv 8(6) \equiv 3 \not\equiv 0 \pmod 9$. %
\end{enumerate}

To prove $\mathcal{K} \subset G((4\mathbb{Z}+u)^{\oplus 3})$, we first establish the image identity $G((4\mathbb{Z}+1)^{\oplus 3}) = G((2\mathbb{Z}+1)^{\oplus 3}) = G(\mathbb{Z}^{\oplus 3}) \cap (8\mathbb{N}+7)$. %
The first equality holds because $G(a,b,c) = G(\pm a, \pm b, \pm c)$, allowing arbitrary sign choices to ensure odd integers lie in $4\mathbb{Z}+1$. %
For the second equality, any $x,y,z \in 2\mathbb{Z}+1$ clearly yields $G(x,y,z) = x^2+3y^2+3z^2 \equiv 1+3+3 = 7 \pmod 8$. %
Conversely, if $k = x^2+3y^2+3z^2 \equiv 7 \pmod 8$, reducing modulo $4$ requires $x^2-y^2-z^2 \equiv 3 \pmod 4$, forcing at least one of $y,z$, say $z$, to be odd. %
Then $z^2 \equiv 1 \pmod 8$, so $m := x^2+3y^2 = k - 3z^2 \equiv 7 - 3 = 4 \pmod 8$. %
Writing $x=2X$ and $y=2Y$, we have $X^2+3Y^2 \equiv 1 \pmod 2$, meaning $X$ and $Y$ have different parities. %
The transformation $x' = X+3Y$ and $y' = X-Y$ then produces odd integers $x',y'$ satisfying $(x')^2+3(y')^2 = 4(X^2+3Y^2) = m$, confirming the existence of an odd representation. %

Since $G$ is an ADC form by Theorem~\ref{Thm:Q and G are ADC}, $G(\mathbb{Z}^{\oplus 3}) = G(\mathbb{Q}^{\oplus 3})$. %
By the Hasse-Minkowski theorem, $k \in \mathcal{K}$ lies in $G(\mathbb{Q}^{\oplus 3})$ if and only if it is represented over $\mathbb{R}$ and over $\mathbb{Q}_p$ for all primes $p$. %
Representation over $\mathbb{R}$ is immediate since $k > 0$ and $G(\sqrt{k},0,0) = k$. %
Over $\mathbb{Q}_2$, $k \equiv 7 \pmod 8$ is directly represented over $\mathbb{Z}_2$ via the base solution $1^2+3(1^2)+3(1^2) = 7$. %
Over $\mathbb{Q}_3$, the conditions $k \equiv 0,1 \pmod 3$ and $k \not\equiv 0 \pmod 9$ ensure that $k$ avoids local obstructions; by Hensel's lemma (cf. Serre \cite[Chapter IV, Theorem 6]{Serre}), non-zero derivatives modulo $3$ guarantee that solutions lift to $\mathbb{Z}_3$. %
Finally, for any odd prime $p \neq 3$, we utilize the rational equivalence $k \in G(\mathbb{Q}_p^{\oplus 3}) \iff 9p^2k \in G(\mathbb{Q}_p^{\oplus 3})$. %
Setting $x=3$, this reduces to solving $3y^2+3z^2 = 9p^2k - 9$, or equivalently representing $3p^2k - 3$ as a sum of two squares $y^2+z^2$ over $\mathbb{Q}_p$. %
Because $3p^2k-3\equiv-3\not\equiv0 \pmod{p}$, the value is a $p$-adic unit. Representing a unit as a sum of two squares over $\mathbb{Z}_p$ is equivalent to the isotropy of the ternary form $\langle 1, 1, -(3p^2k-3) \rangle$; since all coefficients are units and $p \neq 2$, this form is isotropic over the residue field $\mathbb{F}_p$ and lifts to a non-trivial zero over $\mathbb{Z}_p$ via Hensel's lemma \cite{parimala2013}.
Thus, $k$ is globally represented, completing the proof that $PIm(F) = \mathcal{H}$. %
\end{proof}

\begin{corollary}
$\mathcal{N} \subset \mathcal{C}_M \subset \mathcal{Z}$, and $\dim(\mathcal{Z}) = 16$. %
Every $[X] \in \mathcal{C}_M$ is Hassett-maximal. %
\end{corollary}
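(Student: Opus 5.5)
The plan is to assemble the corollary from the results already established, in three steps.

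\textbf{Step 1: each $[X]\in\mathcal{C}_M$ is Hassett-maximal.} Let $[X]\in\mathcal{C}_M$, so $M$ embeds primitively in $A(X)$ with $h_X^2\in M$. Fix $d\in\mathcal{H}$. By the preceding theorem, $PIm(F)=\mathcal{H}$, so there is a primitive integral vector $w$ with $F(w)=d$. Through the identification of $L(M)$ with the quadratic form $F$ (\cite[Definition 3.2]{GalNuer}), $w$ corresponds to a class $\gamma\in M$. Then the saturation $K_d:=\langle h_X^2,\gamma\rangle_{\mathrm{sat}}\subset M$ is a rank-two primitive sublattice containing $h_X^2$, and its discriminant is $d$. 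Here I will use the standard dictionary that the discriminant of $\langle h^2,\gamma\rangle$ equals $3\gamma^2-(h^2\cdot\gamma)^2$, which is the value of the form on the projected class. Since $M\subset A(X)$ is primitive, $K_d$ is also primitive in $A(X)$, and Hassett's description of $\mathcal{C}_d$ then gives $[X]\in\mathcal{C}_d$. As $d$ was arbitrary, $[X]\in\bigcap_{d\in\mathcal{H}}\mathcal{C}_d=\mathcal{Z}$. This proves $\mathcal{C}_M\subset\mathcal{Z}$ and the Hassett-maximality statement.

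\textbf{Step 2: $\mathcal{N}\subset\mathcal{C}_M$.} This is essentially built into the definition of $\mathcal{N}$. For $X\in\mathcal{N}$, Voisin's theorem computes the Gram matrix of $\langle h_X^2,[P_1],\dots,[P_4]\rangle$ as in \eqref{eqn:matrix for M}. The Remark identifies all four $(\alpha,\beta)$ cases with one abstract lattice $M$, and the Definition asserts that this sublattice is primitive. Hence $X$ is $M$-polarizable. For the generic member I would check primitivity by computing the discriminant of $M$ and verifying that no overlattice is compatible with the unimodularity of $H^4$. Since the Definition already asserts primitivity, I will simply cite it.

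\textbf{Step 3: $\dim\mathcal{Z}=16$.} By Theorem 2.5, $\mathcal{N}$ has dimension $16$ and is dense in $\mathcal{C}_M$, so $\dim\mathcal{C}_M=16$. Combined with Step 1, this gives $\dim\mathcal{Z}\ge16$. For the upper bound I invoke \cite{GalNuer}: every component of $\mathcal{Z}$ is some $\mathcal{C}_{M'}$ with $M'$ of the minimal rank $4$ among lattices primitively representing $\mathcal{H}$ (rank $4$ for $L(M')$, rank $5$ for $M'$). Each such $\mathcal{C}_{M'}$ has codimension $\operatorname{rk}M'-1=4$ in the $20$-dimensional moduli space $\mathcal{C}$, hence dimension $16$. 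Therefore $\dim\mathcal{Z}=16$, and $\mathcal{C}_M$ is an irreducible component.

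\textbf{Main obstacle.} The delicate point is Step 1: the passage from a primitive vector of $L(M)$ with $F$-value $d$ to a rank-two primitive sublattice of discriminant exactly $d$. This requires the precise normalization of $L(M)$ in \cite{GalNuer}. If the saturation of $\langle h^2,\gamma\rangle$ were larger, its discriminant would drop by a square factor, so I would cite the lemma from \cite{GalNuer} relating primitive values of the form on $L(M)$ to the discriminants of primitive rank-two overlattices containing $h^2$, rather than redo that argument.
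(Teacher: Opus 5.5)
Your proposal is correct and follows the paper's route: the corollary is assembled from $PIm(F)=\mathcal{H}$ (via the correspondence between primitive vectors of $L(M)$ and saturated rank-two sublattices containing $h_X^2$), from the definition of $\mathcal{N}$, from Theorem 2.5, and from the dimension bound of \cite{GalNuer}. The step you flag as delicate is actually immediate. Since $h_X^2\cdot[P_1]=1$, the quotient $M/\mathbb{Z}h_X^2$ is free. Moreover, for $\alpha=\beta=0$ the paper's $F$ is exactly $3\gamma^2-(h_X^2\cdot\gamma)^2$ in the basis $[P_1],\dots,[P_4]$. Hence any lift of a primitive vector already spans, together with $h_X^2$, a saturated lattice of discriminant $F(w)$.
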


Because the primitive image of $L(M)$ exhausts the Hassett subset $\mathcal{H}$, the 16-dimensional family $\mathcal{N}$ densely parametrizes a single irreducible component of the intersection of all Hassett divisors, which completes the proof of Theorem~\ref{thm:Main theorem}. %

\appendix

\section{Geometric Descent for Non-Euclidean Forms}
\label{app:adc}

While Davenport and Cassels classically proved the ADC property for integer-matrix Euclidean forms \cite[Theorem 8]{Clark}, we provide here a generalized geometric shrinking argument that extends descent to non-Euclidean spaces via torus partitioning, confirming independently that $Q_3$ and $G$ are ADC forms.
The result may be of interest in its own right.

\begin{applemma}
\label{lem:torus}
Let $Q:\mathbb{Z}^n \to \mathbb{Z}$ be an integral quadratic form, and define $M := \sup\{Q(v) : v \in [-1,1]^n\}$. For each integer vector $v$ and prime $p > \lceil \sqrt{M} \rceil^n$ such that $Q(v/p) = m \in \mathbb{Z}$, there exists $v' \in \mathbb{Z}^n$ and integers $0 < i, t < p$ such that $Q(v/p) = Q(v'/(it))$.
\end{applemma}

\begin{proof}
If $p \mid v$, we simply choose $v' = v/p$ and $i=t=1$. Otherwise, for each $0 < i < p$, the vector $iv/p$ is not integral. Consider the projection $\pi: \mathbb{R}^n \to \mathbb{T}^n = \mathbb{R}^n/\mathbb{Z}^n$. We examine the $p$ points $\{\pi(iv/p)\}_{i=0}^{p-1}$. We can separate $\mathbb{T}^n$ into $\lceil\sqrt{M}\rceil^n$ distinct $n$-dimensional cubes of side length $1/\lceil\sqrt{M}\rceil$. Since $p > \lceil\sqrt{M}\rceil^n$, the pigeonhole principle implies that two points must lie in the same cube. Their difference, which is the image via $\pi$ of a multiple of $v/p$, must therefore lie in the cube $\frac{1}{\lceil\sqrt{M}\rceil}[-1,1]^n$. 

Thus, there exists $1 \le i < p$ such that:
$$Q\left(\pi\left(i\frac{v}{p}\right)\right) = \frac{1}{\lceil\sqrt{M}\rceil^2} Q\left(\pi\left(i\frac{v}{p}\right)\lceil\sqrt{M}\rceil\right) \le \frac{M}{\lceil\sqrt{M}\rceil^2} < 1$$
Let $x_{\text{new}}$ be this representative of $i v / p$, and let $z \in \mathbb{Z}^n$ be its nearest integer shift such that $|Q(x_{\text{new}} - z)| < 1$. Crucially, while classical Davenport-Cassels descent relies on Euclidean coordinate rounding to establish this bound, our torus pigeonhole argument bypasses the non-Euclidean nature of $Q$ by scaling the vector until the required strict analytic bound is forced. Because the secant line descent formula itself is purely algebraic and independent of metric properties, feeding it the bounded difference vector $x_{\text{new}} - z$ successfully constructs an associated rational vector $v'/t$ with the same norm $i^2m$ whose denominator satisfies $t < p$. Dividing by $i$ yields the desired equivalence.
\end{proof}

\begin{apptheorem}
\label{thm:general_adc}
Let $Q$ and $M$ be defined as above, and let $k \in \mathbb{N} \cap Q(\mathbb{Q}^n)$. Then there exists a rational solution $Q\left(\frac{x}{\prod p_j^{r_j}}\right) = k$ where $x \in \mathbb{Z}^n$ and every prime factor $p_j$ in the denominator satisfies $p_j \le \lceil \sqrt{M} \rceil^n$.
\end{apptheorem}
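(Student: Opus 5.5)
We argue by descent on the denominator, using Lemma~\ref{lem:torus} as the single reduction step. Since $k\in Q(\mathbb{Q}^n)$, write $k=Q(x/D)$ with $x\in\mathbb{Z}^n$ and $D\ge 1$ a positive integer. Among all such representations, choose one that minimizes a suitable complexity: the multiset of prime factors of $D$ exceeding $B:=\lceil\sqrt{M}\rceil^n$, ordered first by the largest such prime and then by its multiplicity (lexicographically, from the top). We will show this minimal representation has no large prime factors at all.

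Suppose instead that $D$ has a prime factor $p>B$. Write $D=pD'$ and set $v:=x$, viewed as an integer vector with $Q(v/p)=D'^2k=:m\in\mathbb{Z}$. Lemma~\ref{lem:torus} applies to $v$ and $p$, since $m$ is an integer and $p>B$. It produces $v'\in\mathbb{Z}^n$ and integers $0<i,t<p$ with $Q(v/p)=Q(v'/(it))$. Dividing by $D'^2$ gives
\[
k=Q\!\left(\frac{v'}{it\,D'}\right).
\]
In the new denominator $itD'$, one factor of $p$ has been removed from $D$ and replaced by $it$. Every prime factor of $i$ and of $t$ is strictly less than $p$, since $i,t<p$. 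So the largest prime above $B$ either decreases, or keeps its value while its multiplicity drops by one. Any new primes introduced are smaller than $p$; some may exceed $B$, but they are dominated in the lexicographic order. The complexity therefore strictly decreases, which contradicts minimality. Equivalently, one can run an explicit descent: the complexity ordering is a well-order on finite multisets of primes (it is the multiset ordering), so the process terminates.

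At termination every prime $p_j$ dividing the denominator satisfies $p_j\le B$. Writing the denominator as $\prod p_j^{r_j}$ gives the claim.

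The main obstacle is making sure the lemma's hypothesis matches what we feed it. The lemma needs $Q(v/p)\in\mathbb{Z}$, whereas we only know $Q(x/D)=k$. The trick above is to keep the cofactor $D'$ in the numerator value, giving $m=D'^2k$; the lemma is stated for an arbitrary integer $m$, so this is legitimate. A second point is well-foundedness when $i$ or $t$ reintroduces primes between $B$ and $p$. This is why the complexity must be the multiset ordering rather than, say, $\nu_p(D)$ for a fixed $p$ or the size of $D$. I would handle it with the standard Dershowitz--Manna argument: replacing one element of a multiset by finitely many strictly smaller elements is a strict decrease, and that ordering on finite multisets of naturals is well-founded.
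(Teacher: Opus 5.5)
Your proposal is correct and is essentially the paper's argument. Your $D'$ plays the role of the paper's $dp^{r-1}$, you apply Lemma~\ref{lem:torus} to $Q(x/p)=D'^2k$, and rescaling gives the denominator $itD'$, which has one fewer factor of $p$ and introduces only primes smaller than $p$. The one addition is that you make termination explicit through the multiset ordering, which the paper leaves implicit in its remark that iterating ``shrinks the maximal prime component.''
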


\begin{proof}
Let $u$ be a rational vector with $Q(u) = k$. We write $u = v/(p^r d)$ where $p$ is the maximal prime factor of the denominator, $r$ is its multiplicity, and $d$ represents the remaining factors. If $p > \lceil \sqrt{M} \rceil^n$, we multiply $u$ by $dp^{r-1}$ to isolate the denominator $p$. Applying Lemma \ref{lem:torus} yields a new vector with denominator strictly less than $p$. Applying this iteratively shrinks the maximal prime component until all primes $p_j$ are bounded by $\lceil \sqrt{M} \rceil^n$.
\end{proof}

\begin{apptheorem}
The ternary quadratic form $Q_3(x,y,z) = x^2+y^2+3z^2$ and the form $G(x,y,z) = x^2+3y^2+3z^2$ are ADC forms.
\end{apptheorem}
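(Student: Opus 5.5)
We prove that $Q_3$ is ADC; the ADC property for $G$ then follows exactly as in the proof of Theorem~\ref{Thm:Q and G are ADC}, via $Q_3(3y,3z,x)=3G(x,y,z)$ and the fact that $-1$ is not a square modulo $3$. For $Q_3$, the supremum in Lemma~\ref{lem:torus} is $M=\sup_{[-1,1]^3}(x^2+y^2+3z^2)=5$, so $\lceil\sqrt M\rceil^3=27$. Take $k\in\mathbb{N}$ represented rationally by $Q_3$. By Theorem~\ref{thm:general_adc}, $k$ has a rational representation whose denominator involves only primes $p\le 27$, namely $p\in\{2,3,5,7,11,13,17,19,23\}$.

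Next we remove these small primes one at a time. For $p\ge 5$ we run the Davenport--Cassels secant step directly with the smaller cube count that $Q_3$ allows. The key observation is that $Q_3$ is Euclidean off the prime $3$. For any real vector $w$, choose $z\in\mathbb{Z}^3$ with $|w_1-z_1|,|w_2-z_2|\le 1/2$ and $|w_3-z_3|\le 1/2$. This gives $Q_3(w-z)\le 1/4+1/4+3/4=5/4$, which is slightly too large. To get below $1$, use the lattice $\mathbb{Z}^2\times\mathbb{Z}$ with a sign adjustment. In the equality case $|w_3-z_3|=1/2$, exploit the extra freedom of the secant: the standard Davenport--Cassels argument still works when $Q(w-z)=1$ is attained only at boundary points, provided those points are handled separately. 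This is the point to check by hand.

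Concretely, the secant step is as follows. Given $x=v/t$ with $Q_3(x)=k$ and $t>1$, pick $z$ as above. Set $x'=z+\lambda(x-z)$, where $\lambda$ is the second intersection of the line through $z$ and $x$ with the quadric $Q_3=k$. The standard computation gives a new denominator $t'=t\,Q_3(x-z)$, an integer with $t'<t$ whenever $0<Q_3(x-z)<1$. When the bound fails, i.e.\ $Q_3(x-z)\in[1,5/4]$, we have $|x_3-z_3|$ near $1/2$. In that case we pass to the $3$-adic and $2$-adic analysis below.

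The main obstacle is the primes $2$ and $3$, where the naive rounding bound fails. For these we use local arguments. $Q_3$ is regular of class number one (its genus contains a single class, as in \cite[Table 2]{ClarkJagy}). So it suffices to show that a rational representation of $k$ forces representation by $Q_3$ over $\mathbb{Z}_2$ and $\mathbb{Z}_3$, and then to invoke the one-class-genus principle that local integral representability implies global integral representability.
\begin{itemize}
\item[$\bullet$] Over $\mathbb{Z}_3$: the subform $x^2+y^2$ is anisotropic mod $3$ but represents every $3$-adic unit. Together with $3z^2$, one checks by Hensel's lemma that $Q_3$ represents integrally every $k\in\mathbb{Z}_3$ that it represents rationally.
\item[$\bullet$] Over $\mathbb{Z}_2$: a parity analysis shows that $x^2+y^2+3z^2$ represents all of $\mathbb{Z}_2$ except the elements $4^a(8b+\epsilon)$ excluded by the Hilbert symbol condition. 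These excluded elements are exactly those not represented over $\mathbb{Q}_2$.
\end{itemize}
If one prefers to avoid the genus theory, the fallback plan is to bound the problematic configurations explicitly. Only finitely many residue patterns of $x-z$ modulo the scaling give $Q_3(x-z)\ge 1$, and for these one can instead choose $z$ with $z_3$ rounded the other way, or use the automorphism $(x,y)\mapsto(y,x)$ combined with a $3$-adic change of variable. The aim in each case is to force $Q_3(x-z)<1$ and so complete the descent.
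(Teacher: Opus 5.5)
Your proposal does contain a complete proof, but only through the genus-theoretic step. The descent part is superfluous and, as written, not valid. The direct secant step for $p\ge 5$ can stall. For example, $x=(6,5,6)/13$ has $Q_3(x)=1$ and $Q_3(x-z)\ge 1$ for every $z\in\mathbb{Z}^3$, with equality only at $z=0$, whose secant merely returns $-x$. A step that stalls at $p=13$ is not something a $2$-adic or $3$-adic analysis can repair. Drop the descent and run the one-class argument directly on $k$.

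That argument needs $\mathbb{Z}_p$-representability at \emph{every} prime, and the local claims need the following corrections.
\begin{itemize}
\item For $p\nmid 6$ it is automatic: $Q_3$ is unimodular of rank $3$ over $\mathbb{Z}_p$, so it represents all of $\mathbb{Z}_p$. You should say so.
\item At $p=3$ your claim is right. The point to add is that $9\mid Q_3(v)$ forces $3\mid v$. Hence the $\mathbb{Z}_3$-image is $\mathbb{Z}_3$ minus the numbers $9^a(9b+6)$, which is exactly the square class of $-3$ that $\mathbb{Q}_3$ also misses.
\item At $p=2$ your description is wrong. The vectors $e_1+e_2$ and $e_2+e_3$ span an even unimodular plane of determinant $7\equiv -1 \pmod 8$. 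So $Q_3\cong H\perp\langle 5\rangle$ over $\mathbb{Z}_2$, and $Q_3$ represents \emph{every} element of $\mathbb{Z}_2$. There is no $4^a(8b+\epsilon)$ exception; that pattern belongs to $x^2+y^2+z^2$. The implication you need holds, but not for the reason you give.
\item Class number one is easy to verify rather than cite. By Hermite's bound, a positive classically integral ternary of determinant $3$ has minimum $1$. Hence it is $\langle 1,1,3\rangle$ or $\langle 1\rangle\perp A_2$. The latter represents $6=2^2+2$, which $Q_3$ misses already over $\mathbb{Z}_3$.
\end{itemize}

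Compared with the paper, the routes genuinely differ. The paper stays inside the descent framework:
\begin{itemize}
\item Lemma~\ref{lem:torus} and Theorem~\ref{thm:general_adc} clear primes $>27$.
\item An asserted sharper bound clears all odd primes.
\item The prime $2$ is cleared because $Q_3(\mathbb{Z}^3)$ is closed under division by $4$. When $z$ and one of $x,y$ are odd, this needs the same Eisenstein-norm identity as in Section 3, run backwards.
\end{itemize}
It then treats $G$ via $Q_3(3y,3z,x)=3G(x,y,z)$, exactly as you do.

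Your route replaces all the denominator bookkeeping with the local--global principle for a one-class genus. It is short and rigorous, and it explains where the exceptional set $9^a(9b+6)$ comes from, since only $p=3$ has content. The cost is that it imports exactly the external input the appendix was written to avoid: class number one and regularity, as in the Clark--Jagy tables. So it does not give a self-contained descent proof, and it makes Lemma~\ref{lem:torus} and Theorem~\ref{thm:general_adc} unnecessary.

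In exchange it bypasses the paper's reduction from $27$ down to $2$, which the paper only asserts. Your point $(6,5,6)/13$ is precisely where bare rounding fails. There one needs the rescaling of Lemma~\ref{lem:torus}: $2x\equiv(-1,-3,-1)/13$ gives $Q_3=13/169<1$.
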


\begin{proof}
Evaluating the supremum for $Q_3$ yields $M = 5$. Applying the general torus partition bound from Theorem \ref{thm:general_adc} guarantees denominator reduction for primes $p > \lceil \sqrt{5} \rceil^3 = 3^3 = 27$. However, because $Q_3$ is a specific diagonal form, we can bypass coarse box-partitioning entirely. Explicit fractional distance optimization utilizing the geometry of numbers reveals the much tighter prime bound $p \le \sqrt{2(3)} = \sqrt{6} \approx 2.45 < 3$. 

Thus, any rational solution implies an integer solution up to factors of $2$. Therefore $Q_3(x/2^r) = n \implies Q_3(x) = 4^rn$. Modulo 4 considerations confirm that the integer image of $Q_3$ is closed under division by 4, establishing it independently as an ADC form. The deduction that $G$ inherits this property follows precisely as detailed in Theorem~\ref{Thm:Q and G are ADC}.
\end{proof}

\end{document}